\titlespacing{\paragraph}{0em}{0em}{0.5em}
\titlespacing{\subparagraph}{0em}{0em}{0.5em}
\newcommand{\addressa}[1]{\gdef\@addressa{#1}}
\newcommand{\addressb}[1]{\gdef\@addressb{#1}}
\newcommand{\emaila}[1]{\gdef\@emaila{\url{#1}}}
\newcommand{\emailb}[1]{\gdef\@emailb{\url{#1}}}
\newcommand{\sites}[1]{\gdef\@sites{\url{#1}}}
\newcommand{\@endstuff}{\par\vspace{\baselineskip}\noindent\small
\begin{tabular}{@{}l}
	\scshape \@addressa \\
	\textit{E-mail address:} \@emaila \\
	\textit{URL:} \@sites \\[2mm]
	\scshape \@addressb \\
	\textit{E-mail address:} \@emailb
\end{tabular}}
\title{\normalsize\textbf{{\large F}INITELY {\large P}RESENTED {\large G}ROUPS WITH \\ {\large T}RANSCENDENTAL {\large S}PECTRAL {\large R}ADIUS}}
\author{Corentin Bodart and Denis Osin}
\date{\today}
\DeclareMathOperator{\diam}{diam}
\DeclareMathOperator{\supp}{supp}
\newcommand{\CR}{\normalfont{(CR)} }
\newcommand{\Pc}{\mathcal P}
\newcommand{\say}[1]{``#1"}
\newcommand{\pare}[1]{{\normalfont (}#1\hspace*{1pt}{\normalfont )}}
\newcommand{\dif}{\mathrm{d}}
\newcommand{\N}{\mathbb N}
\newcommand{\Z}{\mathbb Z}
\newcommand{\Q}{\mathbb Q}
\newcommand{\R}{\mathbb R}
\newcommand{\C}{\mathbb C}
\newcommand{\PP}{\mathbb P} 
\newcommand{\Cay}{\mathcal C{ay}}
\newcommand{\onto}{\twoheadrightarrow}
\newcommand{\longto}{\longrightarrow}
\newcommand{\la}{\left\langle}
\newcommand{\ra}{\right\rangle}
\newcommand{\lla}{\left\langle\!\left\langle}
\newcommand{\rra}{\right\rangle\!\right\rangle}
\newcommand{\abs}[1]{\left| #1 \right|}
\newcommand{\norm}[1]{\left\|#1\right\|}
\renewcommand{\ge}{\geqslant}
\renewcommand{\le}{\leqslant}
\newcommand{\len}{\trianglelefteq}
\theoremstyle{plain}
\newtheorem{thm}{Theorem}
\newtheorem*{thm*}{Theorem}
\newtheorem{lemma}[thm]{Lemma}
\newtheorem*{lemma*}{Lemma}
\newtheorem{cor}[thm]{Corollary}
\newtheorem*{cor*}{Corollary}
\newtheorem{prop}[thm]{Proposition}
\newtheorem*{prop*}{Proposition}
\theoremstyle{definition}
\newtheorem{defi}[thm]{Definition}
\newtheorem*{defi*}{Definition}
\newtheorem{rem}[thm]{Remark}
\newtheorem*{rem*}{Remark}
\newtheorem{question}[thm]{Question}
\newtheorem*{question*}{Question}
\begin{document}
\renewcommand*{\thethm}{\Alph{thm}}
\maketitle

\begin{abstract}
	\noindent We provide examples of groups with transcendental spectral radius: \smallskip
	
	\begin{itemize}[leftmargin=5mm]
		\item We first construct finitely presented examples, using links between decidability of the Word Problem and semi-computability of the spectral radius. This argument extends to the exponential growth rate and the asymptotic entropy.\smallskip
		
		\item We also construct a finitely generated example with decidable Word Problem, using classical small-cancellation theory. \smallskip
	\end{itemize}
	Along the way, we prove that $C'(1/6)$ groups satisfy the Rapid Decay property, and deduce some properties on their spectral radii of independent interest.
\end{abstract}

Throughout the paper, we consider \emph{marked groups}, that is, pairs $(G,S)$ with $G$ a group and $S$ a finite symmetric generating multiset (i.e., $S=S^{-1}$). We consider the simple random walk $(X_n)_{n\ge 0}$ on $\Cay(G,S)$, starting at $X_0=e_G$, and define
\[ p_{G,S}(g;n) = \PP[X_n=g].\]
Equivalently $p_{G,S}(g;n)=\frac1{\abs{S}^n}\, \#\{w\in S^n\mid \bar w=g\}$\footnote{Most of the time, $G$ and $S$ are clear from the context and $g=e_G$, hence we only write $p(n)$}, where $\bar w$ is the evaluation of the word $w$ as an element of $G$. The \emph{spectral radius} is defined as 
\[ \rho(G,S) = \limsup_{n\to\infty} \sqrt[n]{p_{G,S}(e;n)}. \]
This quantity depends on the global geometry of $\Cay(G,S)$, and is usually hard to compute. Computations of exact values are limited to virtually free groups, and some groups constructed from them (direct product, extension by an amenable group). In all these cases, the value $\rho(G,S)$ turns out to be algebraic (combining \cite{Muller_Schupp} and \cite{Kesten}).

This changed recently when Kassabov and Pak proved that some marked groups have transcendental spectral radius \cite{kassabov2024monotone}. However, their proof is non-constructive.

\medbreak

Our main result is the following:
\begin{thm}\label{thm:main}
	There exists a finitely presented group $G$ such that the spectral radius $\rho(G,S)$ is transcendental for all finite symmetric generating sets $S$.
\end{thm}
This answers Question 7.2 of \cite{kassabov2024monotone}. In Section \ref{sec:other_para}, Theorem \ref*{thm:main} is extended to the exponential growth rate and the asymptotic entropy.
\bigskip

Since the proof of Theorem \ref*{thm:main} uses crucially that $G$ has undecidable Word Problem, we propose another result of the same flavor:
\begin{thm}\label{thm:main2}
	There exists a finitely generated group $G$ with decidable Word Problem such that the spectral radius $\rho(G,S)$ is transcendental for a specific generating set $S$.
\end{thm}
The construction uses classical $C'(1/6)$ small-cancellation theory. As a key ingredient, we use that these groups satisfy the Rapid Decay property (uniformly):

\begin{thm}
	If $G=\la S\mid R\ra$ is a $C'(\frac16)$ presentation (with $R$ possibly infinite), then $(G,d_S)$ has the Rapid Decay property with $P(r)=(10r+1)^3$ \pare{see Definition \ref{def:RD_prop}}.
\end{thm}
Arzhantseva and Dru\cb{t}u have already proven that $C'(1/10)$ groups have the Rapid Decay property \cite{smallcancel_RD}. Our proof is a simplification of their, and gives a single polynomial $P(r)$. A version of this proof, due to the second author, was first circulated in 2014.

\medskip

As a byproduct, we prove the continuity of the spectral radius on subset of $C'(1/6)$ presentations in the space of marked groups:
\begin{cor}
	Let $G_i=\la S\mid R_i\ra$ be a sequence of $C'(1/6)$ presentations such that $(G_i,S)\to (G,S)$ in the space of marked groups, then $\rho(G_i,S) \to \rho(G,S)$.
\end{cor}
This means that, even though the spectral radius is a global invariant, it is determined by the local geometry of the Cayley graph, among $C'(1/6)$ groups. Of course, the same statement is not true in the entire space of marked groups, since free groups (with $\rho<1$) are residually finite and therefore limits of finite groups (with $\rho=1$).
\bigbreak

\textbf{Acknowledgments.} We are grateful to Anna Erschler, Tatiana Nagnibeda and Igor Pak for comments and encouragements, and to Carl-Fredrik Nyberg-Brodda for sparing a few generators and relators in the last construction of \S\ref{sec:main}. We thank the anonymous referees for their careful reading and useful explanations on entropy. The first author was supported by the Swiss NSF grant 200020-200400.

\newpage

\renewcommand*{\thethm}{\arabic{thm}}
\counterwithin{thm}{section}
\section{Semi-computable numbers}

The first ingredient in our proof is the following definition:
\begin{defi}
	A number $x\in\R$ is \emph{lower} (resp.\ \emph{upper}) \emph{semi-computable} if there exists an algorithm enumerating an increasing (resp.\ \emph{decreasing}) sequence $(x_k)$ of real algebraic numbers such that $x_k\to x$.
\end{defi}
(Real algebraic numbers $x_k$ can be specified as triplets $(P_k(X),a_k,b_k)\in\Q[X]\times\Q\times\Q$ such that $x_k$ is the only root of $P_k(X)$ in the interval $[a_k,b_k]$.)

\subsection{Lower bounds} We first observe that spectral radius are lower semi-computable in a very general setting, namely for all recursively presented groups:

\begin{lemma}\label{lem:rho_is_lower_semicomp}
	Let $G$ be a recursively presented group, then $\rho(G,S)$ is lower semi-computable. Moreover, the result is effective: there exists an algorithm with
	\begin{itemize}[leftmargin=18mm, rightmargin=2mm]
		\item[{\normalfont Input:}\hspace*{3mm}] A recursively presented marked group $(G,S)$, specified via a finite set $S$ and an algorithm enumerating a set of defining relations.
		\item[{\normalfont Output:}] An algorithm enumerating an increasing sequence of real algebraic numbers $(x_k)$ such that $x_k\to \rho(G,S)$.
	\end{itemize}
\end{lemma}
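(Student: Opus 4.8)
The plan is to exploit the elementary identity
\[ \rho(G,S) \;=\; \sup_{n\ge 1}\, p_{G,S}(e;2n)^{1/2n}, \]
which replaces the $\limsup$ in the definition of $\rho$ by a genuine \emph{supremum} and thereby makes one-sided (lower) information about return probabilities sufficient. To justify it I would note: keeping only the $g=e_G$ term in $p(e;2m+2n)=\sum_{g}p(e,g;2m)\,p(g,e;2n)$ gives $p(e;2m+2n)\ge p(e;2m)\,p(e;2n)$, so $n\mapsto\log p(e;2n)$ is superadditive and $p(e;2n)^{1/2n}$ increases to $\sup_{n}p(e;2n)^{1/2n}$; and the walk is reversible, $p(g,e;k)=p(e,g;k)$ (as $S=S^{-1}$), so $p(e;2(2m+1))=\sum_{g}p(e,g;2m+1)^2\ge p(e;2m+1)^2$, whence the odd times satisfy $p(e;2m+1)^{1/(2m+1)}\le p(e;2(2m+1))^{1/(2(2m+1))}$ and contribute nothing to the $\limsup$.

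Next I would use that a recursive presentation makes the Word Problem \emph{semi-decidable}: from the given enumeration of defining relators one obtains, by a uniform procedure (enumerate all products of conjugates of the relators and their inverses, and close up under free equality of words), an algorithm listing the set $N=\{w\in S^\ast : \bar w=e_G\}$. For each $n$ the quantity $c(n):=\#\{w\in N : \abs{w}=2n\}=\abs{S}^{2n}\,p_{G,S}(e;2n)$ is finite; hence if $c_k(n)$ denotes the number of words of length $2n$ among the first $k$ outputs of this enumeration, then $c_k(n)$ is nondecreasing in $k$, satisfies $c_k(n)\le c(n)$, and equals $c(n)$ once $k$ is large enough (for each fixed $n$).

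The output algorithm is then: at stage $k$, return
\[ x_k \;=\; \max_{1\le n\le k}\left(\frac{c_k(n)}{\abs{S}^{2n}}\right)^{1/2n} \]
(and $x_k=0$ until the first relation is found), specified as the unique root in $[0,1]$ of $X^{2n}-c_k(n)\abs{S}^{-2n}\in\Q[X]$ for the maximizing index $n$ --- a bona fide real algebraic number together with an isolating interval. The sequence $(x_k)$ is increasing because both the range of the maximum and each count $c_k(n)$ grow with $k$; it satisfies $x_k\le\rho(G,S)$ because $\bigl(c_k(n)\abs{S}^{-2n}\bigr)^{1/2n}\le p_{G,S}(e;2n)^{1/2n}\le\rho(G,S)$; and $\sup_k x_k\ge\rho(G,S)$, since for any fixed $n$ one has $x_k\ge p_{G,S}(e;2n)^{1/2n}$ as soon as $k\ge n$ is large enough that $c_k(n)=c(n)$. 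Being monotone and bounded above by $\rho(G,S)$, it converges to $\rho(G,S)$.

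The real content sits in the identity $\rho(G,S)=\sup_{n}p_{G,S}(e;2n)^{1/2n}$: it is this that lets a procedure which can only ever confirm, never refute, triviality of a word nonetheless pin $\rho$ down from below. Everything else --- dovetailing the enumeration of $N$, comparing finitely many algebraic numbers, and packaging $x_k$ as a triple $(P_k,a_k,b_k)$ --- is routine bookkeeping; the one point demanding a little care is to perform all of it uniformly in the input presentation, as the effectiveness clause requires.
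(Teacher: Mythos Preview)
Your proof is correct and follows essentially the same route as the paper: supermultiplicativity of return probabilities plus Fekete's lemma to turn the $\limsup$ into a supremum, then a partial enumeration of the Word Problem to approximate each $p(n)$ (hence $\rho$) from below. One small slip worth flagging: superadditivity of $n\mapsto\log p(e;2n)$ does \emph{not} imply that $p(e;2n)^{1/2n}$ is increasing, only that its limit equals its supremum; however, you never actually use monotonicity of that sequence (your $x_k$ increase for the reasons you state), so the argument is unaffected. The restriction to even times is harmless but unnecessary --- the paper simply uses $p(m+n)\ge p(m)\,p(n)$ for all $m,n$.
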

\begin{proof}
	Note that $p(m+n)\ge p(m)\cdot p(n)$, therefore Fekete's lemma ensures that
	\[ \rho(G,S) =  \sup_{n\ge 1} \sqrt[n]{p(n)}. \]
	As $G$ is recursively presented, its Word Problem (i.e., the set of words $w\in S^*$ such that $\bar w=e_G$) is recursively enumerable. Let $(v_i)_{i\ge 1}\subseteq S^*$ be a computable enumeration of the Word Problem, and let us define
	\[ p_k(n) = \frac1{\abs S^n} \cdot \#\{v_i : \ell(v_i)=n \text{ and } 1\le i\le k\}. \]
	Observe that $p_k(n)\nearrow p(n)$ pointwise  as $k\to\infty$. In particular
	\[ x_k = \max_{1\le n\le k}\sqrt[n]{p_k(n)}\]
	is a computable increasing sequence such that $x_k\to \rho(G,S)$, as promised.
\end{proof}

\begin{rem}
	The analogous result for \say{upper semi-computable} does not hold, even under the additional assumption that the Word Problem is decidable (with a Word Problem algorithm as part of the input). Indeed, this would imply that amenability is a co-semi-decidable property, contradicting \cite[Theorem 10.6]{Rauzy}.
	
	As will become clear in the proof of Theorem \ref*{thm:main}, there exist finitely presented groups (with undecidable Word Problem) whose spectral radius is not upper semi-computable.
\end{rem}

\section{Proof of Theorem \ref*{thm:main}} \label{sec:main}

\subsection{From semi-computability to the Word Problem} The main observation is
\begin{thm}\label{thm:comp_to_deci}
	Let $(G,S)$ be a recursively presented group such that the spectral radius $\rho(G,S)$ is upper semi-computable.
	Consider a subset $W\subseteq S^*$ such that, for each $w\in W$, either $\bar w=e_G$ or the normal subgroup $\lla \bar w\rra_G$ is non-amenable. Then there exists an algorithm which, given a word $w\in W$, decides whether $\bar w=e_G$ or not.
\end{thm}

(In particular, if $W=S^*$, then $G$ has decidable word problem.) This theorem relies crucially on a classical result due to Kesten.
\begin{thm}[{\cite{Kesten}}]
	Let $(G,S)$ be a marked group and $N\len G$ a non-amenable normal subgroup, then we have $\rho(G,S) < \rho(G/N,S)$.
\end{thm}

\begin{proof}[Proof of Theorem \ref*{thm:comp_to_deci}]
	Suppose that $G$ is presented as $\la S\mid R\ra$. Take $w\in W$, we run two parallel processes to decide whether $\bar w=e_G$ or not.
	\begin{itemize}[leftmargin=8mm]
		\item As $G$ is recursively presented, we can enumerate all words $v\in S^*$ such that $\bar v=e_G$. If at any point $v=w$, we stop the algorithm and conclude that $\bar w=e_G$.
		\item The group $G_w=G/\lla \bar w\rra_G=\la S\mid R,w\ra$ is recursively presented, hence $\rho(G_w,S)$ is lower semi-computable. We can compute two sequences $(x_k)$ and $(y_k)$ such that
		\[ x_k\searrow \rho(G,S) \quad\text{and}\quad  y_k\nearrow \rho(G_w,S). \]
		As $x_k$ and $y_k$ are real algebraic, we can compare them. If at any point $x_k<y_k$, we stop the algorithm and conclude that $\bar w\ne e_G$. It should be noted that, if $\bar w\ne e_G$, then the inequality $x_k<y_k$ will hold eventually as $\lla \bar w\rra_G$ is non-amenable and therefore $\rho(G,S)<\rho(G/\!\lla \bar w\rra_G,S)=\rho(G_w,S)$.
	\end{itemize}
	In both cases, the algorithm finishes and decides correctly if $\bar w=e_G$ or not.
\end{proof}

\bigbreak

\subsection{Finitely presented examples} 

We give two constructions using Theorem \ref{thm:comp_to_deci}.

\textbf{(a)} We recall a family of groups introduced by Higman \cite{Higman}: for each $I\subseteq \Z$,
	\[ H_I \coloneqq \la a,b,c,d \mid a^iba^{-i}=c^idc^{-i} \;\;(i\in I)\ra. \]
This is an amalgamated free product $A*_CB$ with $A=F(a,b)$, $B=F(c,d)$ and $C=F(\{g_i\}_{i\in I})$ (with injections $g_i\mapsto a^ib a^{-i}$ and $g_i\mapsto c^idc^{-i}$ respectively). We define
\[ w_i=a^iba^{-i}c^id^{-1}c^{-i}. \vspace*{-2mm}\]

\begin{lemma}
	If $i\notin I$, then $\lla \bar w_i\rra_{H_I}$ contains a non-abelian free group. \vspace*{-2mm}
\end{lemma}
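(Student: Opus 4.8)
The plan is to exhibit an explicit non-abelian free subgroup of $\lla \bar w_i\rra_{H_I}$ when $i\notin I$, using the structure of $H_I$ as the amalgamated free product $A*_C B$ together with the Ping-Pong lemma. First I would recall that, since $i\notin I$, the element $g_i := a^i b a^{-i}$ of $A = F(a,b)$ is *not* in the amalgamated subgroup $C$ (which is freely generated by $\{a^j b a^{-j}\}_{j\in I}$ inside $A$), and similarly $c^i d c^{-i} \notin C$ inside $B$. Thus $\bar w_i = (a^i b a^{-i})(c^i d^{-1} c^{-i})$ is a reduced word of length $2$ in the amalgam $A*_C B$, alternating between a nontrivial coset representative in $A\setminus C$ and one in $B\setminus C$; in particular $\bar w_i$ has infinite order.

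Next I would produce a second element whose conjugate, together with $\bar w_i$, generates a free group of rank $2$. A natural choice is to conjugate $\bar w_i$ by a suitable element $t\in H_I$ and show $\langle \bar w_i, t\bar w_i t^{-1}\rangle$ is free of rank $2$: for instance take $t = a$ (or some word in $A$) so that $t \bar w_i t^{-1} = (a^{i+1} b a^{-(i+1)})(a c^i d^{-1} c^{-i} a^{-1})$, which is again an alternating reduced word but built from *different* coset representatives, chosen so that no cancellation can link the two. The cleanest route is normal-form bookkeeping in the amalgam: show that any nonempty reduced word in $\bar w_i^{\pm1}$ and $(t\bar w_i t^{-1})^{\pm1}$, after substitution, is already in reduced form over $A*_C B$ (no two consecutive syllables land in the same factor, and no syllable falls into $C$), hence is nontrivial by the normal form theorem for amalgamated products. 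This uses only that the relevant products of coset representatives stay outside $C$, which follows from the freeness of $A$ and $B$ and the explicit description of $C$. (Alternatively one can invoke the standard fact that in $A*_C B$ with $C$ malnormal-ish position, two "generic" hyperbolic elements generate a free group, but the hands-on normal-form argument is self-contained.)

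The main obstacle is the careful choice of $t$ and the verification that no accidental cancellation or collapse into $C$ occurs when forming arbitrary reduced words in the two generators — i.e., checking that the syllables genuinely alternate between $A\setminus C$ and $B\setminus C$ across the juxtapositions $\bar w_i \cdot t\bar w_i t^{-1}$, $\bar w_i \cdot (t\bar w_i t^{-1})^{-1}$, etc. This is a finite case analysis governed by the first and last syllables of each generator and its inverse, and it hinges on the key inclusion $a^j b a^{-j}\notin C$ for $j\notin I$ (and its $B$-side analogue), which is exactly where the hypothesis $i\notin I$ enters. Once that alternation is established, the normal form theorem for $A*_C B$ immediately gives that the subgroup is free of rank $2$, completing the proof.
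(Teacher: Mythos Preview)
Your plan is correct and matches the paper's proof, which simply exhibits $\langle \bar w_i,\, b\bar w_i b^{-1}\rangle$ and invokes the Normal Form Theorem for amalgamated free products (the paper conjugates by $b$ rather than your suggested $a$, but either choice works). One small slip: the normal form of $a\bar w_i a^{-1}$ in $A*_C B$ is $(a^{i+1}ba^{-i})\cdot(c^id^{-1}c^{-i})\cdot(a^{-1})$, of length~$3$ --- your second factor $ac^id^{-1}c^{-i}a^{-1}$ lies in neither vertex group --- but this does not affect the argument, and the junction checks go through exactly as you outline, the decisive one being $(a\bar w_i a^{-1})^2$, where the merged $A$-syllable is $a^iba^{-i}\notin C$ precisely because $i\notin I$.
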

\begin{proof}
	The subgroup $\la w_i,bw_ib^{-1}\ra$ is non-abelian free. This follows directly from the Normal Form Theorem for amalgamated free products, see eg.\ \cite[p. 186-187]{Lyndon_Schupp}.
\end{proof}

Consider $I\subset\Z$ which is recursively enumerable (hence $H_I$ is recursively presented), but not recursive. Using Higman's embedding theorem \cite{Higman}, we can embed $H_I$ into a finitely presented group $G$ (as Higman did). For any finite symmetric generating set $S$ of $G$, we consider a monoid homomorphism $\varphi\colon \{a^\pm,b^\pm,c^\pm,d^\pm\}^*\to S^*$ such that $\overline{\varphi(v)}=\bar v$ for all $v\in\{a^\pm,b^\pm,c^\pm,d^\pm\}$, and let $W=\{\varphi(w_i):i\in\Z\}\subset S^*$.
\begin{itemize}[leftmargin=8mm]
	\item Using the lemma, either $\bar w_i=e_G$ or $\lla \bar w_i\rra_G\ge \lla \bar w_i\rra_{H_I}$ is non-amenable.
	\item As $I$ is not recursive, there does not exist any algorithm which, given $\varphi(w_i)\in W$ (equivalently $i\in \Z$) decides whether $\bar w_i=e_G$ or not (equivalently $i\in I$ or not).
\end{itemize}
Therefore $\rho(G,S)$ cannot be upper semi-computable, hence cannot be algebraic. \vspace*{2mm} \hfill$\square$

\begin{rem}
	Using Karrass-Solitar theorem on amalgamated free products \cite{Karrass_Solitar}, one should be able to prove that $\lla g\rra_{H_I}$ contains a non-abelian free group for all $g\in H_I\setminus \{e\}$.
\end{rem}


\bigbreak

\textbf{(b)} Start with $H=\la S_0\mid R\ra$ a finitely presented group with undecidable Word Problem, first constructed in \cite{Boone,Novikov}. We consider $G=H*\la t\ra$ and $W=S_0^*$. For each $w\in W$, either $\bar w$ is trivial, or the normal subgroup it generates satisfy
\[ \lla \bar w\rra_G \ge \la \bar w,\, t\bar wt^{-1},\, t^2\bar wt^{-2}\ra \simeq C_n * C_n * C_n \ge F_2 \]
where $n\in\{2,3,\ldots\}\cup\{\infty\}$ is the order of $\bar w$. Again, Theorem \ref*{thm:comp_to_deci} implies that $\rho(G,S)$ cannot be upper semi-computable, hence is transcendental.

Note that, combining the main result of \cite{Borisov} with \cite[Theorem IV]{HNN}, we can find a group $H$ with undecidable Word Problem defined by $2$ generators and $12$ relators, and therefore $G$ defined by as little as $3$ generators and $12$ relators.

\section{Extension to other parameters} \label{sec:other_para}

Similarly to \cite{kassabov2024monotone}, the same ideas can be used to provide finitely presented groups for which the value of various parameters $f\colon\mathcal M\to\R$ (where $\mathcal M$ is the space of marked groups) are not computable. The key ingredients are the following:
\begin{itemize}[leftmargin=8mm]
	\item A recursively presented group $(G,S)$ and a subset $W\subseteq S^*$ such that
	\begin{itemize}[leftmargin=5mm]
		\item the Word Problem restricted to $W$ is undecidable, and
		\item $f(G,S)<f(G/\!\lla \bar w\rra_G,S)$ for all $w\in W$ such that $\bar w\ne e_G$.
	\end{itemize}
	\item For every recursively presented group $(Q,S)$, the value $f(Q,S)$ is lower semi-computable, and the result is effective in the sense of Lemma \ref{lem:rho_is_lower_semicomp}.
\end{itemize}
Under these conditions, we may conclude $f(G,S)$ is not upper semi-computable. In the following subsections, we check both conditions for the growth rate and the asymptotic entropy (up to a change of sign). In both cases, we may take $G = H*\Z$ where $H$ is any finitely presented group with undecidable Word Problem.

\subsection{Growth rates} Let $d_S$ be the word metric on $G$, and $\beta_{G,S}(n) = \abs{B_{G,S}(e,n)}$ the volume of the ball of radius $n$. The \emph{growth rate} of $(G,S)$ is defined as
\[ \omega(G,S) = \limsup_{n\to\infty}\sqrt[n]{\beta_{G,S}(n)}.\]
Anna Erschler constructed a continuum of (infinitely presented) marked groups with distinct growth rates \cite{Erschler_small_canc}, in particular containing transcendental values. Once again, our recipe provides finitely presented examples of the form $G=H*\Z$.
\begin{thm}[\cite{Sambusetti2002}]
	Let $H$ be a non-trivial group, then $G=H*\Z$ is \emph{growth tight}, that is, for every generating set $S$ and every $N\len G$ with $N\ne 1$, we have
	\[ \omega(G/N,S)<\omega(G,S).\]
\end{thm}
\begin{lemma} \label{lem:omega_is_upper_semicomp}
	Let $(G,S)$ be a recursively presented group, then the growth rate $\omega(G,S)$ is upper semi-computable. Moreover, the result is effective.
\end{lemma}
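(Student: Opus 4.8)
The plan is to run the argument of Lemma~\ref{lem:rho_is_lower_semicomp} ``in reverse''. There, a recursive enumeration of the Word Problem approximates $p(n)$ \emph{from below}, and Fekete's lemma converts this into a \emph{supremum}; here, the same enumeration will approximate $\beta_{G,S}(n)$ \emph{from above}, and submultiplicativity of ball volumes will convert this into an \emph{infimum}. Concretely, since $B_{G,S}(e,m)\cdot B_{G,S}(e,n)\supseteq B_{G,S}(e,m+n)$ we have $\beta_{G,S}(m+n)\le\beta_{G,S}(m)\,\beta_{G,S}(n)$, so Fekete's lemma gives $\omega(G,S)=\inf_{n\ge1}\sqrt[n]{\beta_{G,S}(n)}$ (in particular the $\limsup$ is a genuine limit). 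It therefore suffices to produce, uniformly in the input presentation, a non-increasing sequence of real algebraic numbers converging to this infimum.

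Since $G$ is recursively presented, its Word Problem is recursively enumerable, uniformly in the presentation (as already used in Lemma~\ref{lem:rho_is_lower_semicomp}); fix a computable enumeration $(v_i)_{i\ge1}\subseteq S^*$ of it. For a word $v\in S^*$ write $v^{-1}$ for its formal inverse; then two words $u,v$ represent the same element of $G$ precisely when $uv^{-1}$ represents $e_G$. Hence, from the finitely many words $v_1,\dots,v_k$, one can compute the equivalence relation $\equiv_k$ on the (finite) set of words of length at most $n$ generated by all pairs $(u,v)$ with $\ell(u),\ell(v)\le n$ and $uv^{-1}\in\{v_1,\dots,v_k\}$ — form the graph with these edges and count connected components. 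Let $\beta_k(n)$ denote the resulting number of $\equiv_k$-classes. Enlarging $k$ only adds edges, so $\beta_k(n)$ is non-increasing in $k$; and every pair with $uv^{-1}$ representing $e_G$ is eventually detected (since $uv^{-1}$ is then some $v_i$), so $\beta_k(n)\searrow\beta_{G,S}(n)$ for each fixed $n$.

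Finally, set $y_k=\min_{1\le n\le k}\sqrt[n]{\beta_k(n)}$, which for the minimizing $n$ is the $n$-th root of a positive integer, hence a real algebraic number the algorithm can output. The sequence $(y_k)$ is non-increasing, since the range of the minimum grows while each $\beta_k(n)$ decreases with $k$. To see $y_k\to\omega(G,S)$: from $\beta_k(n)\ge\beta_{G,S}(n)$ we get $y_k\ge\inf_{n\ge1}\sqrt[n]{\beta_{G,S}(n)}=\omega(G,S)$ for every $k$; conversely, given $\varepsilon>0$, pick $n_0$ with $\sqrt[n_0]{\beta_{G,S}(n_0)}<\omega(G,S)+\varepsilon$, and observe that the \emph{integers} $\beta_k(n_0)$ decrease to $\beta_{G,S}(n_0)$, hence equal it for all large $k$, so that $y_k\le\sqrt[n_0]{\beta_{G,S}(n_0)}<\omega(G,S)+\varepsilon$ once $k\ge n_0$ is large enough. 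Everything is carried out uniformly in the input presentation, which gives the effective statement. The one mildly delicate point is reconciling the infimum coming from Fekete's lemma with the monotone over-approximation of the balls — settled exactly by this integrality (eventual stabilization) observation — and there is no deeper obstacle.
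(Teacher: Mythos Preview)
Your proof is correct and follows essentially the same approach as the paper: use submultiplicativity of $\beta_{G,S}(n)$ and Fekete's lemma to write $\omega(G,S)=\inf_n\sqrt[n]{\beta_{G,S}(n)}$, then over-approximate each $\beta_{G,S}(n)$ by counting classes of an increasing sequence of computable equivalence relations on $S^{\le n}$, and take $y_k=\min_{1\le n\le k}\sqrt[n]{\beta_k(n)}$. The only cosmetic difference is that the paper enumerates the diagonal $\Delta(G,S)=\{(v,w):\bar v=\bar w\}$ directly, whereas you enumerate the Word Problem and recover pairs via $uv^{-1}$; and you spell out the integrality/stabilization argument for $y_k\to\omega(G,S)$, which the paper leaves as the bare assertion ``$\beta_k(n)\searrow\beta_{G,S}(n)$ hence $y_k\searrow\omega(G,S)$''.
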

\begin{proof}
	The sequence $\beta_{G,S}(n)$ is sub-multiplicative, so Fekete's lemma implies that
	\[ \omega(G,S) = \inf_{n\ge 1}\sqrt[n]{\beta_{G,S}(n)}. \]
	Consider $\{(v_i,w_i)\}_{i\ge 1}$ a computable enumeration of
	\[ \Delta(G,S) = \{(v,w)\in S^*\times S^*\mid \bar v=\bar w\}. \]
	At time $k$, we define an equivalence relation $\sim_k$ on $S^*$, declaring $v_i\sim_k w_i$ for all $1\le i\le k$ (and then considering the reflexive, transitive closure). We define
	\[ P^{\le n}_k=\faktor{S^{\le n}}{\sim_k} \quad\text{and}\quad \beta_k(n) = \abs{P^{\le n}_k} \]
	Finally, let $y_k = \min\limits_{1\le n\le k} \sqrt[n]{\beta_k(n)}$. We have $\beta_k(n)\searrow \beta_{G,S}(n)$ hence $y_k\searrow \omega(G,S)$.
\end{proof}

\subsection{Asymptotic entropy} Let $(G,S)$ be a marked group. We consider the simple random walk $(X_n)_n$ on $\Cay(G,S)$. Recall that the Shannon entropy of $X_n$ is given by \vspace*{1mm}
\[ H(X_n) = -\sum_{g\in \supp X_n} p(g;n)\,\log\big(p(g;n)\big). \]
The \emph{asymptotic entropy} of $(G,S)$ is defined as 
\[ h(G,S)=\lim_{n\to\infty} \frac1n H(X_n). \]

Once again, both conditions are satisfied (for either construction of \S\ref{sec:main}):
\begin{thm} \label{thm:Kesten_for_entropy}
	Let $(G,S)$ be a group and $N\len G$ non-amenable, then
	\[ h(G/N,S)<h(G,S).\]
\end{thm}
This follows from results of Kaimanovich and Vershik. We recall some notations: for general probability measures $\mu$ on $G$, we denote by $\Gamma(G,\mu)$ and $h(G,\mu)$ the Poisson boundary and the asymptotic entropy of the $\mu$-random walk on $G$, respectively.
\begin{proof}
	Consider a measure $\mu$ of a finite entropy on $G$ (eg.\ the uniform measure on $S$), and let $\pi\colon G\onto G/N$ be the quotient map. \cite[Theorem 3.2]{KaimanovichVershik} states that
	\[ E_{(G,\mu)}\bigl(\Gamma(G/N,\pi_*\mu),\nu\bigr) \le h(G,\mu) \]
	with equality if and only if $\Gamma(G/N,\pi_*\mu)\simeq \Gamma(G,\mu)$, where $E_{(G,\mu)}$ denotes the differential entropy of a measure $G$-space and $\nu$ is the harmonic measure on $\Gamma(G/N,\pi_*\mu)$. Moreover,
	\begin{align*}
		E_{(G,\mu)}\bigl(\Gamma(G/N,\pi_*\mu),\nu\bigr)
		& := \sum_{g\in G}\mu(g) \int \log\left(\frac{\dif \nu}{\dif\, g^{-1}\nu}(\gamma)\right)\cdot\dif\nu(\gamma) \\
		& \ = \sum_{h\in G/N}\pi_*\mu(h) \int \log\left(\frac{\dif \nu}{\dif\, h^{-1}\nu}(\gamma)\right)\cdot\dif\nu(\gamma) \\
		& \ = E_{(G/N,\pi_*\mu)}\bigl(\Gamma(G/N,\pi_*\mu),\nu\bigr) = h(G/N,\pi_*\mu)
	\end{align*}
	using \cite[Theorem 3.1]{KaimanovichVershik} for this last equality. Finally, the main result of \cite{Kaimanovich_entropy} states that, if $N$ is non-amenable, then $\Gamma(G,\mu)$ and $\Gamma(G/N,\pi_*\mu)$ are not isomorphic. We conclude that $h(G/N,\pi_*\mu)=E\bigl(\Gamma(G/N,\pi_*\mu),\nu\bigr)< h(G,\mu)$.
\end{proof}
\bigbreak
\begin{lemma}\label{lem:h_is_upper_semicomp}
	Let $(G,S)$ be a recursively presented group, then the asymptotic entropy $h(G,S)$ is upper semi-computable. Moreover, the result is effective.
\end{lemma}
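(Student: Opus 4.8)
The plan is to follow the proof of Lemma \ref{lem:omega_is_upper_semicomp} essentially verbatim, replacing the ball volumes $\beta_{G,S}(n)$ by the Shannon entropies $H(X_n)$ and the (super/sub)-multiplicativity of $\beta$ by the sub-additivity of $n\mapsto H(X_n)$. First I would record that $H(X_{m+n})\le H(X_m)+H(X_n)$: the law of $X_{m+n}$ is the convolution of the laws of $X_m$ and $X_n$, so $X_{m+n}$ is a deterministic function of the independent pair $(X_m,\,X_m^{-1}X_{m+n})$, and the entropy of a discrete variable does not increase under functions and does not exceed the sum of the marginal entropies of an independent pair. Since $H(X_1)\le\log\abs{S}<\infty$, Fekete's lemma (the classical Avez--Kaimanovich--Vershik observation) yields that the limit defining $h(G,S)$ exists and equals $\inf_{n\ge1}\frac1n H(X_n)$. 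It therefore suffices to produce, effectively, a decreasing sequence of computable upper bounds for these numbers.

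Reusing the ingredients from the proof of Lemma \ref{lem:omega_is_upper_semicomp} --- a computable enumeration $\{(v_i,w_i)\}_{i\ge1}$ of $\Delta(G,S)$ and the induced equivalence relations $\sim_k$ on $S^*$ --- I would set, for each $n$, $P^n_k=\faktor{S^n}{\sim_k}$, assign to a class $C\in P^n_k$ the mass $q_C=\abs{C}/\abs{S}^n$, and define $H_k(X_n)=-\sum_{C\in P^n_k}q_C\log q_C$. Two elementary facts drive the argument. First, $\sim_k$ refines equality in $G$ on the finite set $S^n$, and the two coincide once $k$ is large (there are finitely many pairs of words of length $n$ with the same image in $G$, and each is eventually enumerated); hence $H_k(X_n)=H(X_n)$ for all large $k$. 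Second --- the entropy counterpart of the fact that \say{under-merging over-counts elements} --- splitting a mass $q=q'+q''$ with $0\le q',q''\le q$ can only increase its contribution to the entropy, because $-q'\log q'-q''\log q''\ge-q'\log q-q''\log q=-q\log q$. Since $\sim_{k+1}\supseteq\sim_k$ makes $P^n_{k+1}$ a coarsening of $P^n_k$, we get $H_k(X_n)\searrow H(X_n)$ monotonically in $k$ (indeed, eventually constant).

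Now set $y_k=\min_{1\le n\le k}\frac1n H_k(X_n)$. Monotonicity of $k\mapsto H_k(X_n)$ together with the widening range of the minimum gives $y_{k+1}\le y_k$; the inequality $y_k\ge\inf_{n\ge1}\frac1n H(X_n)=h(G,S)$ is immediate; and for $\varepsilon>0$, fixing $n_0$ with $\frac1{n_0}H(X_{n_0})<h(G,S)+\varepsilon/2$ and then taking $k$ large forces $y_k\le\frac1{n_0}H_k(X_{n_0})<h(G,S)+\varepsilon$. So $y_k\searrow h(G,S)$. The only wrinkle is that the $y_k$ involve logarithms and so are not algebraic; this is harmless, since $\log$ is computable: I would compute a rational $z_k\ge y_k$ with $z_k-y_k<1/k$ and output $\tilde y_k=\min_{1\le j\le k}z_j$, a computable decreasing sequence of rationals with $h(G,S)\le\tilde y_k\le y_k+1/k\to h(G,S)$. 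Enumerating $\Delta(G,S)$ from the recursive presentation, forming $\sim_k$, partitioning the finite sets $S^n$ and approximating the logarithms are all uniform in the input, which gives effectiveness. The step requiring care is the splitting inequality and its consequence that the computable approximation lies \emph{above} the true entropy; granting that, the rest is the same bookkeeping as in Lemma \ref{lem:omega_is_upper_semicomp}.
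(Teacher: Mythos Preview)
Your proof is correct and follows essentially the same route as the paper: sub-additivity plus Fekete to get $h(G,S)=\inf_n\frac1n H(X_n)$, then the partitions $P^n_k=S^n/{\sim_k}$ and the approximate entropies $H_k(X_n)\searrow H(X_n)$, and finally $y_k=\min_{1\le n\le k}\frac1n H_k(X_n)$. You in fact add a detail the paper glosses over, namely that the $y_k$ are not algebraic and must be replaced by rational upper approximations $\tilde y_k$; this is a genuine (if minor) improvement in rigor.
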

\begin{proof}
	As the sequence $H(X_n)$ is sub-additive, Fekete's lemma gives
	\[ h(G,S)= \inf_{n\ge 1} \frac1n H(X_n). \]
	Reusing the notations from Lemma \ref{lem:omega_is_upper_semicomp}, we define $P^n_k=\faktor{S^n}{\sim_k}$ and
	\[ H_k^n = -\sum_{A\in P_k^n} \frac{\abs A\,}{\,\abs S^n}\log\left(\frac{\abs A\,}{\,\abs S^n}\right).\]
	Observe that $H_k^n\searrow H(X_n)$ as $k\to\infty$. Finally, the sequence $x_k=\min_{1\le n\le k}\frac1n H_k^n$ provides upper approximations for $h(G,S)$.
\end{proof}

\bigbreak




\section{Property (RD) for $C'(1/6)$ groups}

\subsection{(RD) and (CR)}

Recall the definition of the Rapid Decay property of \cite{Jolissaint}:
\begin{defi}[Property (RD)] \label{def:RD_prop}
	Let $G$ be a group with a word metric $d_S$. We say that $G$ has the \emph{Rapid Decay property} if there exists a polynomial $P\in\R[X]$ such that
	\[ \norm{f}_{\mathrm{op}} \le P(r)\cdot \norm{f}_2 \]
	for all $f\in\C G$ supported on the ball $B_S(e,r)$.
\end{defi}
\begin{rem}
	The polynomial depends on choice of a word metric $d_S$, but the fact that $G$ satisfies the property does not.
\end{rem}

\medbreak

Throughout the years, many sufficient conditions for the (RD) property have been developed (see \cite{Centroid_property}). We will use a condition due to Chatterji and Ruane \cite{ChatterjiRuane}. We state it in the restricted setting of groups acting on themselves:
\begin{defi}[Property (CR)]A marked group $(G,d_S)$ has the \CR property if there exists a $G$-equivariant map $C\colon G\times G\to\Pc(G)$ (the powerset of $G$) such that
	\begin{enumerate}[leftmargin=8mm, label=(\alph*)]
		\item For all $x,y\in G$, we have $x\in C(x,y)$.
		\item For all $x,y,z\in G$, we have $C(x,y)\cap C(y,z)\cap C(z,x)\ne \emptyset$.
		\item There exists a polynomial $Q(r)$ such that, for all $x,y\in G$ and all $r\ge 0$,
		\[ \abs{C(x,y)\cap B_S(x,r)}\le Q(r). \vspace*{-2mm}\]
		\item There exists a polynomial $R(r)$ such that, for all $x,y\in G$ satisfying $d_S(x,y)\le r$, the diameter of $C(x,y)$ is bounded by $R(r)$.
	\end{enumerate}
\end{defi}
\begin{rem}
	Once again, the specific polynomials $Q(r)$ and $R(r)$ depends on the choice of the word metric $d_S$, but their mere existence does not.
\end{rem}

The link is made through the following result:
\begin{prop}[{\cite[Proposition 1.7]{ChatterjiRuane}}] \label{app:thm:CR}
	If $(G,S)$ has the \CR property for $Q(r)$ and $R(r)$, then it satisfies the Rapid Decay property for $P(r)=Q(R(r))^\frac32$.
\end{prop}

\bigbreak

\subsection{Strebel's description of geodesics in $C'(1/6)$ groups}

Let us recall some geometric properties of Cayley graphs of $C'(1/6)$ presentations:

\medbreak

\begin{lemma}[{\cite[Lemma 41]{strebel}}] \label{app:lem}
	Consider a path labeled by $u$ with $uv\in R$.
	\begin{itemize}[leftmargin=8mm]
		\item If $\abs{u}<\abs{v}$, then $u$ is the unique geodesic between its endpoints.
		\item If $\abs{u}=\abs{v}$, then $u$ and $v$ are the only two geodesics between their endpoints.
	\end{itemize}
\end{lemma}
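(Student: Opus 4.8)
The statement is a standard fact about geodesics in $C'(1/6)$ Cayley graphs, so the plan is to deduce it from Greendlinger's lemma together with the structure of van Kampen diagrams. First I would set up the situation: let $p$ be a path labeled by $u$ in $\Cay(G,S)$ between two vertices $x,y$, and suppose $uv \in R$ (so $uv$ is a cyclically reduced relator, read with appropriate orientation), with $|u| \le |v|$. Since $uv \in R$, the element $\bar u = \bar v^{-1}$, so $v^{-1}$ also labels a path from $x$ to $y$; the claim is essentially that these are the only short paths. Suppose $q$ is any geodesic from $x$ to $y$, labeled by a word $w$ with $|w| \le |u|$. Then $w u^{-1}$ (suitably reduced) represents the identity, so there is a reduced van Kampen diagram $D$ over $\la S \mid R\ra$ whose boundary is (a cyclic conjugate of) $w u^{-1}$.

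The key step is a case analysis on this diagram $D$. If $D$ has no faces, then $w$ and $u$ differ only by free reduction; since both are reduced words (geodesics are reduced, and $u$ may be assumed reduced as a subword of the reduced relator $uv$), this forces $w = u$, and we are in the first bullet's conclusion. If $D$ has at least one face, Greendlinger's lemma provides a face $\Pi$ whose boundary $\partial\Pi$ shares with $\partial D$ an arc that is more than half of $\partial\Pi$; since $C'(1/6)$ gives that pieces are shorter than $\frac16|\partial\Pi|$, this shared arc has length more than $\frac12 |\partial\Pi| - $ (small error), in fact more than $\tfrac12|\partial \Pi|$ up to the usual refinement. This long arc must lie along the $w$-side or the $u$-side of $\partial D$. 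It cannot lie along the $u$-side: $u$ is a subword of the single relator $uv$, and if a sub-arc of $u$ of length $> \frac12|\partial\Pi|$ coincided with part of $\partial\Pi = (uv)^{\pm1}$ in a reduced diagram this would contradict reducedness (adjacent faces cancel) unless $\Pi$ is the whole diagram. Hence the long arc lies along the $w$-side, so $w$ contains a subword $s$ which is more than half of the cyclic word $uv$; replacing $s$ by the complementary (shorter, length $< \frac12 |uv|$) subword yields a shorter word representing the same element, contradicting that $w$ is geodesic — unless the replacement produces a word of the same length, which happens exactly when $|u| = |v|$ and $w = v^{-1}$. Tracking this dichotomy carefully is what distinguishes the two bullets.

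Concretely I would organize it as: (i) reduce to analyzing reduced van Kampen diagrams $D$ with $\partial D = w^{-1} u$ where $w$ is geodesic; (ii) handle the faceless case directly by free reduction; (iii) in the case with faces, apply Greendlinger to extract a face meeting $\partial D$ in a long outer arc, argue this arc is on the $w$-side (using that $u,v$ come from one relator and the diagram is reduced), and perform the length-reducing surgery; (iv) observe the surgery strictly shortens $w$ when $|u| < |v|$ (giving uniqueness of $u$) and can at best replace $w$ by $v^{-1}$ when $|u| = |v|$, then iterate/induct on the number of faces to conclude $w \in \{u, v^{-1}\}$. The main obstacle is step (iii): making the "the long arc cannot be on the $u$-side" argument airtight, since one must rule out $\Pi$ touching $\partial D$ along $u$ without $\Pi$ filling essentially all of $D$ — this is where the hypothesis that $uv$ is a single relator (not an arbitrary word) and the reducedness of $D$ are both essential, and one has to be careful about cyclic conjugation and the precise Greendlinger constant. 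Since this is quoted from [strebel], I would keep the proof brief and refer there for the delicate diagrammatic bookkeeping.
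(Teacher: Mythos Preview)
The paper does not give its own proof of this lemma: it is quoted directly from Strebel's notes (the citation in the lemma header) and used as a black box, with no proof environment following it. So there is nothing in the paper to compare your proposal against.

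Your sketch is the standard Greendlinger/van Kampen approach and is essentially correct, with one small correction worth flagging. In your step (iii)--(iv) you say the Dehn surgery on the $w$-side ``can at best replace $w$ by $v^{-1}$ when $|u|=|v|$''. In fact, if the Greendlinger arc lies \emph{entirely} on the $w$-side, it has length strictly greater than $\tfrac12|\partial\Pi|$, so the replacement strictly shortens $w$ and contradicts geodesicity outright, regardless of whether $|u|<|v|$ or $|u|=|v|$. The exceptional outcome $w=v^{-1}$ does not arise from that surgery; it arises from the case where the long arc lies on (or overlaps) the $u$-side. There the arc, being a subword of $u$ and hence of the relator $uv$, is too long to be a piece, which forces $\partial\Pi$ to be a cyclic shift of $(uv)^{\pm1}$; then $|\partial\Pi|=|uv|\ge 2|u|$ makes the arc longer than $|u|$ unless the single face $\Pi$ fills the entire bigon, giving $w=v^{-1}$ and $|u|=|v|$. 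So the dichotomy between the two bullets comes from the $u$-side/corner analysis, not from a non-strict surgery on the $w$-side. This is exactly the ``delicate bookkeeping'' you already flagged in step (iii), and your instinct to defer to Strebel for it is the right call.
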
 

\vspace*{4mm}

\begin{cor}
	Let $\alpha$ be a geodesic segment in $\Cay(G,S)$, and let $\beta$ be a cycle labeled by a relation $r\in R$. Then $\alpha\cap \beta$ is either empty or connected. \vspace*{-2mm}
\end{cor}
\begin{proof}
	Consider $p,q\in \alpha\cap\beta$. Using Lemma \ref*{app:lem}, any geodesic between $p$ and $q$ has to be one of the two arcs of $\beta$ delimited by $p$ and $q$. In particular, this holds for the segment of $\alpha$ between $p$ and $q$.
\end{proof}

\begin{thm}[{\cite[Theorem 43]{strebel}}] \label{app:thm:str}
	If $T$ is a simple\footnote{meaning the sides are disjoint} geodesic triangle, then any reduced van Kampen diagram it spans is of one of the following forms: \vspace*{2mm}
	\begin{center}
		\begin{tikzpicture}[xscale=.6, yscale=.85]
			\node[circle, fill=black, inner sep=2pt] (x) at (0,0) {};
			\node[circle, fill=black, inner sep=2pt] (y) at (6,0) {};
			\node[circle, fill=black, inner sep=2pt] (z) at (3,3) {};
			
			\node[inner sep=0] (xy1) at (1,0) {};
			\node[inner sep=0] (xy2) at (2,0) {};
			\node[inner sep=0] (xy3) at (3,0) {};
			\node[inner sep=0] (xy4) at (4,0) {};
			\node[inner sep=0] (xy5) at (5,0) {};
			\node[inner sep=0] (xz1) at (.5,.5) {};
			\node[inner sep=0] (xz2) at (1,1) {};
			\node[inner sep=0] (xz3) at (1.5,1.5) {};
			\node[inner sep=0] (xz4) at (2,2) {};
			\node[inner sep=0] (xz5) at (2.5,2.5) {};
			
			\draw[very thick] (x) -- (y) -- (z) -- (x);
			\draw[thick] 	(xy1) -- (xz1)
			(xy2) -- (xz2)
			(xy4) -- (xz4)
			(xy5) -- (xz5);
			{\normalfont
				\node at (3.3,-.66) {Type $I_2$};
				\node[rotate=30] at (2.2,.95) {$\dots$};
			}
		\end{tikzpicture} \hspace*{3mm}
		\begin{tikzpicture}[xscale=.6, yscale=.85]
			\node[circle, fill=black, inner sep=2pt] (x) at (0,0) {};
			\node[circle, fill=black, inner sep=2pt] (y) at (6,0) {};
			\node[circle, fill=black, inner sep=2pt] (z) at (3,3) {};
			
			\node[inner sep=0] (xy1) at (1,0) {};
			\node[inner sep=0] (xy2) at (2,0) {};
			\node[inner sep=0] (xy3) at (3,0) {};
			\node[inner sep=0] (xy4) at (4,0) {};
			\node[inner sep=0] (xy5) at (5,0) {};
			\node[inner sep=0] (xz1) at (.5,.5) {};
			\node[inner sep=0] (xz2) at (1,1) {};
			\node[inner sep=0] (xz3) at (1.5,1.5) {};
			\node[inner sep=0] (xz4) at (2,2) {};
			\node[inner sep=0] (xz5) at (2.5,2.5) {};
			\node[inner sep=0] (yz1) at (5.5,.5) {};
			\node[inner sep=0] (yz2) at (5,1) {};
			\node[inner sep=0] (yz3) at (4.5,1.5) {};
			\node[inner sep=0] (yz4) at (4,2) {};
			\node[inner sep=0] (yz5) at (3.5,2.5) {};
			
			\draw[very thick] (x) -- (y) -- (z) -- (x);
			\draw[thick] 	(1.5,0) -- (xz2)
			(2.5,0) -- (xz4)
			(3.5,0) -- (yz4)
			(4.5,0) -- (yz2);
			{\normalfont
				\node at (3.3,-.66) {Type $I_3$};
				\node[rotate=30] at (1.8,.8) {$\dots$};
				\node[rotate=-30] at (4.3,.8) {$\dots$};
			}
		\end{tikzpicture} \hspace*{3mm}
		\begin{tikzpicture}[xscale=.6, yscale=.85]
			\node[circle, fill=black, inner sep=2pt] (x) at (0,0) {};
			\node[circle, fill=black, inner sep=2pt] (y) at (6,0) {};
			\node[circle, fill=black, inner sep=2pt] (z) at (3,3) {};
			
			\node[inner sep=0] (xy1) at (1,0) {};
			\node[inner sep=0] (xy2) at (2,0) {};
			\node[inner sep=0] (xy3) at (3,0) {};
			\node[inner sep=0] (xy4) at (4,0) {};
			\node[inner sep=0] (xy5) at (5,0) {};
			\node[inner sep=0] (xz1) at (.5,.5) {};
			\node[inner sep=0] (xz2) at (1,1) {};
			\node[inner sep=0] (xz3) at (1.5,1.5) {};
			\node[inner sep=0] (xz4) at (2,2) {};
			\node[inner sep=0] (xz5) at (2.5,2.5) {};
			\node[inner sep=0] (yz1) at (5.5,.5) {};
			\node[inner sep=0] (yz2) at (5,1) {};
			\node[inner sep=0] (yz3) at (4.5,1.5) {};
			\node[inner sep=0] (yz4) at (4,2) {};
			\node[inner sep=0] (yz5) at (3.5,2.5) {};
			
			\draw[very thick] (x) -- (y) -- (z) -- (x);
			\draw[thick] 	(xy1) -- (xz1)
			(2.5,0) -- (1.33,1.33)
			(xy5) -- (yz1)
			(3.5,0) -- (4.66,1.33)
			(1.8,1.8) -- (4.2,1.8)
			(xz5) -- (yz5);
			{\normalfont
				\node at (3.3,-.66) {Type $I\!I$};
				\node[rotate=30] at (1.35,.55) {$\dots$};
				\node[rotate=-30] at (4.75,.5) {$\dots$};
				\node at (3,2.27) {$\vdots$};
			}
		\end{tikzpicture}
		
		\begin{tikzpicture}[xscale=.6, yscale=.85]
			\node[circle, fill=black, inner sep=2pt] (x) at (0,0) {};
			\node[circle, fill=black, inner sep=2pt] (y) at (6,0) {};
			\node[circle, fill=black, inner sep=2pt] (z) at (3,3) {};
			
			\node[inner sep=0] (xy1) at (1,0) {};
			\node[inner sep=0] (xy3) at (3,0) {};
			\node[inner sep=0] (xy5) at (5,0) {};
			\node[inner sep=0] (xz5) at (2.5,2.5) {};
			\node[inner sep=0] (yz1) at (5.5,.5) {};
			\node[inner sep=0] (yz5) at (3.5,2.5) {};
			
			\draw[very thick] (x) -- (y) -- (z) -- (x);
			\draw[thick] 	(xy1) -- (xz1)
			(xy3) -- (1.33,1.33)
			(xy5) -- (yz1)
			(xy3) -- (4.66,1.33)
			(1.8,1.8) -- (4.2,1.8)
			(xz5) -- (yz5);
			{\normalfont
				\node at (3.3,-.66) {Type $I\!I\!I_1$};
				\node[rotate=30] at (1.45,.5) {$\dots$};
				\node[rotate=-30] at (4.7,.5) {$\dots$};
				\node at (3,2.27) {$\vdots$};
			}
		\end{tikzpicture}	\hspace*{3mm}
		\begin{tikzpicture}[xscale=.6, yscale=.85]
			\node[circle, fill=black, inner sep=2pt] (x) at (0,0) {};
			\node[circle, fill=black, inner sep=2pt] (y) at (6,0) {};
			\node[circle, fill=black, inner sep=2pt] (z) at (3,3) {};
			
			\node[inner sep=0] (xy1) at (1,0) {};
			\node[inner sep=0] (xy5) at (5,0) {};
			\node[inner sep=0] (xz1) at (.5,.5) {};
			\node[inner sep=0] (xz5) at (2.5,2.5) {};
			\node[inner sep=0] (yz1) at (5.5,.5) {};
			\node[inner sep=0] (yz2) at (5,1) {};
			\node[inner sep=0] (yz3) at (4.5,1.5) {};
			\node[inner sep=0] (yz4) at (4,2) {};
			\node[inner sep=0] (yz5) at (3.5,2.5) {};
			\node[inner sep=1pt, circle, fill=black] (m) at (3,.95) {};
			
			\draw[very thick] (x) -- (y) -- (z) -- (x);
			\draw[thick] 	(xy1) -- (xz1)
			(2.5,0) -- (1.33,1.33)
			(xy5) -- (yz1)
			(3.5,0) -- (4.66,1.33)
			(1.8,1.8) -- (4.2,1.8)
			(xz5) -- (yz5)
			(m) -- (xy3)
			(m) -- (1.55,1.55)
			(m) -- (4.45,1.55);
			{\normalfont
				\node at (3.3,-.66) {Type $IV$};
				\node[rotate=30] at (1.35,.55) {$\dots$};
				\node[rotate=-30] at (4.75,.5) {$\dots$};
				\node at (3,2.27) {$\vdots$};
			}
		\end{tikzpicture} \hspace*{3mm}
		\begin{tikzpicture}[xscale=.6, yscale=.85]
			\node[circle, fill=black, inner sep=2pt] (x) at (0,0) {};
			\node[circle, fill=black, inner sep=2pt] (y) at (6,0) {};
			\node[circle, fill=black, inner sep=2pt] (z) at (3,3) {};
			
			\node[inner sep=0] (xy1) at (1,0) {};
			\node[inner sep=0] (xy3) at (3,0) {};
			\node[inner sep=0] (xy5) at (5,0) {};
			\node[inner sep=0] (xz1) at (.5,.5) {};
			\node[inner sep=0] (xz5) at (2.5,2.5) {};
			\node[inner sep=0] (yz1) at (5.5,.5) {};
			\node[inner sep=0] (yz5) at (3.5,2.5) {};
			\node[inner sep=1pt, circle, fill=black] (m) at (3,1.05) {};
			
			\draw[very thick] (x) -- (y) -- (z) -- (x);
			\draw[thick] 	(xy1) -- (xz1)
			(2.5,0) -- (1.33,1.33)
			(xy5) -- (yz1)
			(3.5,0) -- (4.66,1.33)
			(1.8,1.8) -- (4.2,1.8)
			(xz5) -- (yz5)
			(m) -- (3,1.8)
			(m) -- (1.916,0.666);
			\draw[thick, red] (m) -- (4.083,0.666);
			{\normalfont
				\node at (3.3,-.66) {Type $V$};
				\node[rotate=30] at (1.35,.55) {$\dots$};
				\node[rotate=-30] at (4.75,.5) {$\dots$};
				\node at (3,2.27) {$\vdots$};
			}
		\end{tikzpicture}
	\end{center}
\end{thm}
A recurring observation is that every interior (resp.\ exterior) consolidated\footnote{i.e.\ concatenation of edges seen as a single segment on the previous diagrams, eg.\ the red segment} edge of a face labeled by $r\in R$ has length $<\frac16\abs{r}$ (resp.\ $\le\frac12\abs r$).

\bigbreak

\subsection{Proof of property (CR)}

For each $x,y\in G$, we pick $[x,y]$ a geodesic between $x$ and $y$. (In a $G$-equivariant way: pick the geodesic with ShortLex labelling.)

\medbreak

We define $C(x,y)$ as the union of $[x,y]$ with all loops $\gamma$ labeled by relations such that $\abs{\gamma\cap[x,y]}\ge \frac16\abs{\gamma}$ (meaning the number of common edges). We consider $C(x,y)$ as a subgraph, formed by all vertices and edges of this geodesic and these cycles. \vspace*{1mm}

We now check that this construction satisfies Property (CR).
\begin{enumerate}[leftmargin=8mm, label=(\alph*)]
	\item This is obvious.
	\item Fix $x,y,z\in G$. Let $p_x$ be the point on $[x,y]\cap [z,x]$ furthest from $x$, and similarly for $p_y,p_z$. It follows that $[p_x,p_y],[p_y,p_z],[p_z,p_x]$ form a simple geodesic triangle.
	
	In each of the $6$ types above, any cycle $\gamma$ around a face has a most $6$ sides, hence one of its exterior edges $\gamma\cap [p_a,p_b]$ has length $\ge\frac16\abs{\gamma}$ (for $a,b\in\{x,y,z\}$). It follows that $\gamma\subseteq C(a,b)$. An easy case-by-case analysis now implies that
	\[ C(x,y)\cap C(y,z)\cap C(z,x)\ne\emptyset.\]
	
	\item \textbf{Claim.} For every $p\in C(x,y)$, we have $d_S(x,p)\ge d_{C(x,y)}([x,y],p)$. 
	\begin{proof}[Proof of the claim]
		If $p\in[x,y]$, the statement is obvious. Suppose $p\in \gamma$. We consider $[x,p]$ a geodesic between $x$ and $p$. Let $p'$ be the point on $[x,p]\cap\gamma$ furthest from $p$, and $x'$ be the point on $[x,p]\cap[x,y]$ furthest from $x$ (see Figure \ref*{app:fig:wlog}). Observe that $d_S(x',p')\ge d_{C(x',y)}([x',y],p')$ implies the desired inequality, hence we may suppose w.l.o.g.\ $p=p'$ and $x=x'$. 
		\begin{center}
			\begin{minipage}{.45\linewidth}
				\centering
				\begin{tikzpicture}[scale=.8]
					\node[circle, fill=black, inner sep=1.5pt, label=below:$x$] (x) at (-3.5,0) {};
					\node[circle, fill=black, inner sep=1.5pt, label={[label distance=-3.5pt]below:$x'$}] (x') at (-2.5,0) {};
					\node[circle, fill=black, inner sep=1.5pt, label=below:$y$] (y) at (2.5,0) {};
					\node[circle, fill=black, inner sep=1.5pt, label=above:$p'$] (p') at (-.3,1.5) {};
					\node[circle, fill=black, inner sep=1.5pt, label=above:$p$] (p) at (1,1.5) {};
					
					\draw[very thick] (x) -- (x') -- (y);
					\draw[very thick] (x') to [out=25, in=-160, looseness=.9] (p') to [out=20, in=160] (p);
					\draw[very thick, dashed, Magenta] (-.7,.8)
					to [out=90, in=-160, looseness=.7] (p')
					to [out=20, in=160] (p)
					to [out=-20, in=100, looseness=.7] (1.5,.8);
					\node[Magenta] at (1.7,1.2) {$\gamma$};
				\end{tikzpicture} \vspace*{-2.5mm}
				\captionof{figure}{$p$, $p'$, $x$ and $x'$.} \label{app:fig:wlog}
			\end{minipage}
			\begin{minipage}{.45\linewidth}
				\centering
				\begin{tikzpicture}[scale=.8]
					\fill[Turquoise!15] (0,1.5) -- (-3,0) -- (1,0)
					to [out=105, in=-45] (.5,.53)
					to [out=135, in=-85] (0,1.5);
					\fill[Turquoise!30] (.3,0) -- (1,0) to [out=105, in=-45] (.5,.53) -- (.3,0);
					\draw[thick, black] (.3,0) -- (.5,.53);
					\node at (.62,.18) {\footnotesize$r$};
					
					\draw[Magenta, very thick] (1,0)
					to [out=105, in=-45] (.5,.53)
					to [out=135, in=-85] (0,1.5)
					to [out=95, in=-120] (.25,2.2);
					\node[Magenta] at (.5,1) {$\gamma$};
					
					\node[circle, fill=black, inner sep=1.5pt, label=below:$x$] (x) at (-3,0) {};
					\node[circle, fill=black, inner sep=1.5pt, label=above:$p$] (p) at (0,1.5) {};
					\node[circle, fill=black, inner sep=1.5pt, label=below:$q$] (q) at (1,0) {};
					\draw[very thick] (p) -- (x) -- (q) -- (1.5,0);
				\end{tikzpicture} \vspace*{-2mm}
				\captionof{figure}{A relation in the $q$ corner.} \label{app:fig:corner}
			\end{minipage}
		\end{center}
		We pick $q$ an endpoint of $\gamma\cap[x,y]$ such that $\abs{[p,q]}\le\frac12\abs\gamma$ where $[p,q]$ is the corresponding arc of $\gamma$. If both endpoints satisfy this condition, we pick $q$ closer to $x$. We consider the simple geodesic triangle formed by $[x,q]$, $[x,p]$, $[p,q]$. 
		
		Observe that the corner $q$ cannot be filled by a triangle relation (Figure \ref*{app:fig:corner}). Indeed $\abs{r\cap[x,p]}\le\frac12\abs r$, $\abs{r\cap [p,q]}<\frac16\abs{r}$ and the interior edges (at most two of them in Type $V$) have length $<\frac16\abs r$. The same is true for the corner $p$. It follows that the diagram spanned has to be of type $I_2$ (see Figure \ref*{app:fig:I2}).
		\begin{center}
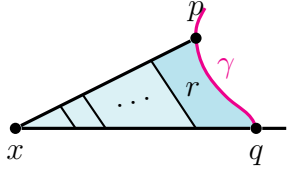

			\begin{tikzpicture}[scale=.8]
				\fill[Turquoise!15] (0,1.5) -- (-3,0) -- (1,0)
				to [out=105, in=-45] (.5,.53)
				to [out=135, in=-85] (0,1.5);
				\fill[Turquoise!30] (0,1.5) -- (-.75,1.133) -- (0,0) -- (1,0)
				to [out=105, in=-45] (.5,.53)
				to [out=135, in=-85] (0,1.5);
				\node at (-.05,.65) {$r$};
				\node[rotate=15] at (-1,.45) {$\dots$};
				\draw[Magenta, very thick] (1,0)
				to [out=105, in=-45] (.5,.53)
				to [out=135, in=-85] (0,1.5)
				to [out=95, in=-120] (.15,2);
				\node[Magenta] at (.5,1) {$\gamma$};
				
				\node[circle, fill=black, inner sep=1.5pt, label=below:$x$] (x) at (-3,0) {};
				\node[circle, fill=black, inner sep=1.5pt, label=above:$p$] (p) at (0,1.5) {};
				\node[circle, fill=black, inner sep=1.5pt, label=below:$q$] (q) at (1,0) {};
				\draw[very thick] (p) -- (x) -- (q) -- (1.5,0);
				\draw[thick] (-2,0) -- (-2.25,.366);
				\draw[thick] (-1.5,0) -- (-1.875,.55);
				\draw[thick] (0,0) -- (-.75,1.133);
			\end{tikzpicture} \vspace*{-2mm}
			\captionof{figure}{The true configuration.} \label{app:fig:I2}
		\end{center}
		Note that, if $r=\gamma$, then $\abs{r\cap [x,p]}=0$ and the interior edge has length $<\frac16\abs r$, and the second arc from $p$ to $[x,y]$ has length $\le \frac12\abs\gamma$ and is closer to $x$, which contradicts the definition of $q$. We conclude that $r\ne\gamma$, hence $\abs{r\cap[x,q]}\le\frac12\abs r$, $\abs{[p,q]}=\abs{r\cap[p,q]}<\frac16\abs r$ and the interior edge is $<\frac16\abs r$. This forces
		\[ d_S(x,p) \ge \abs{[x,p]\cap r} \ge \frac16\abs r. \]
		In particular, we get $d_{C(x,y)}(x,y)\le \abs{[p,q]}<\frac16\abs r\le d_S(x,p)$.
	\end{proof}
	
	\medbreak
	
	Hence, for every $p\in B_S(e,r)\cap C(x,y)$, there exists a relation $\gamma$ and an endpoint $q$ of $\gamma\cap[x,y]$ such that $d_{C(x,y)}(p,q)\le r$. In particular, $d_S(x,q)\le 2r$. Note that each point $q\in[x,y]$ is the endpoint of at most two segments $\gamma\cap[x,y]$. Indeed we cannot have $\gamma_1\cap[x,y]\subseteq \gamma_2\cap[x,y]$ by the $C'(1/6)$ condition. It follows that
	\[ \abs{B_S(e,r)\cap C(x,y)} \le (2r+1)^2 =: Q(r). \]
	
	\item Fix $x,y\in G$. Let $p,q\in C(x,y)$. We first suppose that $p\in \gamma_1$ and $q\in \gamma_2$, with $\gamma_1\cap\gamma_2\cap[x,y]\ne\emptyset$. Then  \vspace*{-6mm}
	\begin{center}
		\begin{minipage}{.45\linewidth}
			\begin{align*}
				d_S(p,q)
				& \le \min\{\ell_1+a+\ell_2;\; r_1+c+r_2\} \\
				& \le \frac12(\ell_1+r_1+\ell_2+r_2+a+c) \\
				& \le \frac12\big(5(a+b)+5(b+c)+a+c\big) \\
				& \le 5(a+b+c) \le 5 \, d_S(x,y).
			\end{align*}
		\end{minipage}\hspace*{2mm}
		\begin{minipage}{.45\linewidth}
			\centering
			\begin{tikzpicture}[baseline=(b.base)]
				\draw[thick, Turquoise, fill=Turquoise!20] (-2.2,0)
				to [out=60, in=170, looseness=.9] (-.8,1.2)
				to [out=-10, in=120, looseness=.9] (.5,0) to (-2.2,0);
				\draw[thick, Magenta, fill=Magenta!20] (-1.2,0)
				to [out=-60, in=180, looseness=.9] (.5,-1.2)
				to [out=0, in=-130, looseness=.9] (2,0) to (-1.2,0);
				\node[Turquoise] at (-.9,.5) {$\gamma_1$};
				\node[Magenta] at (.5,-.6) {$\gamma_2$};
				
				\node[circle, fill=black, inner sep=1.5pt, label=below:$x$] (x) at (-3,0) {};
				\node[circle, fill=black, inner sep=1.5pt, label=below:$y$] (y) at (3,0) {};
				\draw[very thick] (x) -- (y);
				
				\node[circle, fill=black, inner sep=1pt] (l1) at (-2.2,0) {};
				\node[circle, fill=black, inner sep=1pt] (l2) at (-1.2,0) {};
				\node[circle, fill=black, inner sep=1pt] (r1) at (.5,0) {};
				\node[circle, fill=black, inner sep=1pt] (r2) at (2,0) {};
				\node[circle, fill=black, inner sep=1.2pt, label=above:$p$] (p) at (-.8,1.2) {};
				\node[circle, fill=black, inner sep=1.2pt, label=below:$q$] (q) at (.5,-1.2) {};				
				
				\node at (-1.7,-.25) {$a$};
				\node at (-.35,-.22) {$b$};
				\node at (1.25,-.25) {$c$};
				\node at (-2.1,.7) {$\ell_1$};
				\node at (.5,.65) {$r_1$};
				\node at (-.95,-.77) {$\ell_2$};
				\node at (1.8,-.8) {$r_2$};
				
				\node (b) at (0,2) {};
			\end{tikzpicture}
		\end{minipage}
	\end{center}
	In the other cases ($p$ or $q$ on $[x,y]$; $\gamma_1,\gamma_2$ disjoints), we have $d_S(p,q)\le 3d_S(x,y)$. In conclusion, we have $\diam_S(C(x,y)) \le 5 \, d_S(x,y)$, so $R(r)=5r$ satisfies.
\end{enumerate}
This finishes the proof of the (CR) property. Combining this with Proposition \ref{app:thm:CR}, we deduce the Rapid Decay property in a uniform and effective way:

\begin{thm}
	Let $G=\la S\mid R\ra$ be a $C'(1/6)$ presentation {\normalfont(}\hspace*{-.5mm}with $R$ possibly infinite{\normalfont)}, then $(G,S)$ has the Rapid Decay property with $P(r)=(10r+1)^3$.
\end{thm}

\section{Proof of Theorem \ref*{thm:main2}} \label{sec:mainB}

\subsection{Upper semi-computability} We find upper approximations for $\rho(G,S)$. This requires control on the speed of convergence of $\sqrt[2n]{p(2n)}\to\rho(G,S)$ (and some way to compute $p(2n)$). This control is given to us by the Rapid Decay property, via the following observation (taking $f(x)=p(x;n)$ in Definition \ref{def:RD_prop}):
\begin{thm}[{Chatterji, Pittet, Saloff-Coste, see  \cite[Theorem 1.3]{RD_random_walks}}] If $(G,d_S)$ has the Rapid Decay property for a given polynomial $P$, then
	\[ \rho(G,S)^{2n} \le P(n)^2 \cdot p(2n).\]
\end{thm}

\begin{cor} \label{cor:rho_cont}
	Let $G_i=\la S\mid R_i\ra$ be a sequence of $C'(1/6)$ presentations such that $(G_i,S)\to (G,S)$ in the space of marked groups, then $\rho(G_i,S) \to \rho(G,S)$.
\end{cor}
\begin{proof}
	Combining the previous results, we have
	\begin{equation} \tag{$*$}
		\rho(G_i,S)/\sqrt[n]{P(n)}\le \sqrt[2n]{p_{G_i,S}(2n)} \le \rho(G_i,S),
	\end{equation}
	in particular $\sqrt[2n]{p_{G_i,S}(2n)} \to \rho(G_i,S)$ uniformly. It follows that
	\begin{align*}
		\lim_{i\to\infty}\rho(G_i,S) & = \lim_{i\to\infty}\lim_{n\to\infty}\sqrt[2n]{p_{G_i,S}(2n)} \\
		& \overset!= \lim_{n\to\infty}\lim_{i\to\infty}\sqrt[2n]{p_{G_i,S}(2n)} = \lim_{n\to\infty}\sqrt[2n]{p_{G,S}(2n)}=\rho(G,S). \qedhere
	\end{align*}
\end{proof}
\begin{cor} \label{cor:rho_is_comp}
	If $G=\la S\mid R\ra$ is a recursive $C'(\frac16)$ presentation, then $\rho(G,S)$ is upper semi-computable {\normalfont(}hence computable{\normalfont)}. Moreover, the result is effective.
\end{cor}
\begin{proof} 
	Rewriting the equation $(*)$, we get that
	\[ \sqrt[2n]{p(2n)} \le \rho(G,S) \le \sqrt[n]{P(n)} \sqrt[2n]{p(2n)}. \]
	Moreover, the Word Problem is decidable (using Dehn's algorithm), hence $p(2n)$ is computable. This provides both upper and lower bounds.
\end{proof}

\subsection{Construction}

More precisely, we prove the following result:

\begin{thm} \label{thm:diagonal}
Let $X=\{x_n\}_{n\ge 1}$ be recursively enumerable set of computable reals, i.e., such that there exists an algorithm with specifications
\begin{itemize}[leftmargin=18mm, rightmargin=2mm]
	\item[{\normalfont Input:}\hspace*{3mm}] An index $n$.
	\item[{\normalfont Output:}] Two algorithms providing two monotonous sequences of rational numbers $(a_{m,n})_{m\ge 1}$ and $(b_{m,n})_{m\ge 1}$ such that $a_{m,n}\nearrow x_n$ and $b_{m,n}\searrow x_n$ as $m\to\infty$.
\end{itemize}
Then there exists a marked group $(G,S)$ with decidable Word Problem s.t.\ $\rho(G,S)\notin X$.
\end{thm}
We combines small-cancellation theory and a diagonal argument. This combination of ideas already appears in \cite{Erschler_small_canc}. The main additional difficulty is computability.

\begin{proof}
	Fix an alphabet $S$, and consider an infinite sequence of cyclically reduced words $(r_i)_{i\in\N}\subset S^*$ satisfying the $C'(1/6)$ condition. For instance, we can take $S=\{a,b\}$ and $r_i=(a^ib^i)^7$ as in \cite{Erschler_small_canc}.
	For $I\subseteq\N$, we define $H_I = \la S\mid \{r_i\}_{i\in I}\ra$.
	
	We define two sequences $(i_k)\subset\N$ and $(\varepsilon_i)\subset\Q_{>0}$, with $(i_k)$ increasing, such that
	\[ \forall 1\le j\le k, \quad \abs{\rho(H_{I_k},S) - x_j}>\varepsilon_j, \]
	where $I_k = \{i_1,\ldots,i_k\}$. Suppose that $(i_j)_{j\le k}$ and $(\varepsilon_j)_{j\le k}$ are fixed. Observe that
	\begin{enumerate}[leftmargin=8mm, label=(\arabic*)]
		\item $H_I=\la S\mid \{r_i\}_{i\in I}\ra$ is acylindrically hyperbolic \cite[Theorem 1.3]{smallcancel_acyli}, hence every infinite normal subgroup is acylindrically hyperbolic \cite[Corollary 1.5]{acyli}. This covers $\lla r_\ell\rra_{H_I}$ for $\ell\notin I$. ($r_\ell^n$ does not contain half of a relation $r_i$, hence $\la r_\ell\ra$ is infinite by Greendlinger's lemma.) It follows that $\lla r_\ell\rra_{H_I}$ is not amenable hence
		\[ \forall \ell\notin I,\quad \rho(H_I,S) < \rho(H_{I\cup\{\ell\}},S). \]
		
		\item Greendlinger's lemma implies that the Cayley graphs $\Cay(H_I,S)$ and $\Cay(H_J,S)$ coincide up to radius $\frac12\min_{i\in I\triangle J}\abs{r_i}$. In particular, we have $(H_{I\cup\{\ell\}},S)\to (H_I,S)$ in the space of marked groups. Corollary \ref{cor:rho_cont} now implies that
		\[ \lim_{\ell\to\infty}\rho(H_{I_k\cup\{\ell\}},S) = \rho(H_{I_k},S). \]
	\end{enumerate}
	This implies that, for $\ell$ large enough, we have
	\[ \forall 1\le j\le k, \quad \abs{\rho(H_{I_k\cup\{\ell\}},S) - x_j}>\varepsilon_j \quad\text{and}\quad \rho(H_{I_k\cup\{\ell\}},S)\ne x_{i+1}. \] 
	Therefore, we can estimate in parallel (dovetailing) all values $x_j$ for $1\le j\le k+1$ and $\rho(H_{I_k\cup\{\ell\}},S)$ for $\ell>i_k$ (Corollary \ref{cor:rho_is_comp}), until we find a certificate that some $\ell$ satisfies the previous conditions. We take $i_{k+1}=\ell$ and $0<\varepsilon_{k+1}<\abs{\rho(H_{I_k\cup\{\ell\}},S)-x_{k+1}}$.
	
	\medbreak
	
	Finally, we consider $G=H_{\{i_1,i_2,\ldots\}}$. By construction, we have $\abs{\rho(G,S)-x_j}\ge \varepsilon_j$ for all $j$, hence $\rho(G,S)\notin X$. Moreover, $G$ is given by a recursive $C'(1/6)$ presentation and therefore has decidable Word Problem (using Dehn's algorithm).
\end{proof}

\begin{rem}
	Observations (1) and (2) prove that the family $(H_I)_{I\subseteq\N}$ satisfies the conclusions of \cite[Lemma 1.11]{kassabov2024monotone}. In particular, this provides a family of $2$-generated groups for which the spectral radius $\rho(H_I,S)$ (or any other strictly monotone parameter) takes a continuum of values. Another family satisfying these conditions is Higman's family $(H_{\bar I})_{I\subseteq \Z}$ used in Section \ref{sec:main}. We should note, however, that none of these families allows to reprove \cite[Theorem 1.2]{kassabov2024monotone} (\say{no isolated points}).
\end{rem}

\begin{rem}
	We could replace the use of Corollary \ref{cor:rho_cont} by \cite[Theorem 2]{Ollivier}, which implies that $\rho(G/\!\lla w_\ell\rra,S)\to \rho(G,S)$ in probability, where $G$ is a fixed non-elementary torsion free hyperbolic group and $w_\ell\in S^*$ is a random word of length $\ell$.
\end{rem}



\medbreak
\section{Further remarks}
\medbreak

\newcommand{\cube}{\mathrm{cube}}
\subsection{An explicit example} \label{ssec:Woess} The most natural question is the following:

\begin{question}
	Does there exist a finitely presented group $(G,S)$ with decidable Word Problem and transcendental spectral radius $\rho(G,S)$?
\end{question}

Sarnak asked if the spectral radius is transcendental for $G=\pi_1(\Sigma_2)$ the surface group \cite{Grigorchuk_delaHarpe}, and lots of effort has been done to approximate this value \cite{Tatiana, Gouezel}. Of course, we expect $\rho(G,S)$ to be transcendental even in this case. However, we suggest looking at $G=\Z^d*\Z^d$ (with $d\ge 5$) might be a better try for transcendence. 

We consider the \emph{cubical} set $S_\cube=\{\pm 1\}^d$, which generates a group isomorphic to $\Z^d$. Recall that the \emph{Green series} is defined as
\[ \Gamma_{\Z^d,\cube}(z) = \sum_{n\ge 0} p(n)\cdot z^n = \sum_{n\ge 0} {2n\choose n}^d \left(\frac{z}{2^d}\right)^{2n}. \]
This function is a generalized hypergeometric function, specifically 
\[ \Gamma_{\Z^d,\cube}(z) = \leftindex_{d+1}{F}_{d} \left(\frac12,\ldots,\frac12;1,\ldots,1;z\right).\] 
For $d\ge 5$, Woess proves that $\rho(G,S)=1-\frac1{2\theta}$ where
$\theta = \Gamma_{\Z^d,\cube}(1)$ \cite[p.108]{Woess}. (Note that $\theta$ is the expected number of returns at $\mathbf 0$ of the simple random walk.) We are reduced to prove that the value of a G-function at $z=1$ is transcendental. Even though the transcendence of these values is wide open, we fall into the realm of things mathematicians have thought about, and developed some machinery for. \cite{Fischler_Rivoal,survey_Gfunctions}


\medbreak

\subsection{Computability of $\rho(G,S)$}
Note that we cannot take $X=\{\text{computable reals}\}$ in Theorem \ref{thm:diagonal}. (Actually, by construction/Corollary \ref{cor:rho_is_comp}, the spectral radius will be computable for our examples.) This begs the following question:
\begin{question}
	Does there exist a marked group $(G,S)$ with decidable Word Problem such that $\rho(G,S)$ is not computable? (Equivalently, not upper semi-computable.)
\end{question}
If we denote $p(2m)=\rho^{2m}/f(m)$ (with $f(m)\ge 1$ the sub-exponential correction). This would imply that there exists no computable function $N\colon \N\to\N$ such that
\[ \forall m\ge N(n),\quad  \frac1m\log\!\big(f(m)\big) \le \frac1n, \]
despite $\frac1m\log\!\big(f(m)\big)\to 0$. An approach in the spirit of Theorem \ref*{thm:main} could pass by a positive answer to the following question:
\begin{question}
	Does there exist a finitely generated group $G$ with decidable Word Problem such that, given as input a finite subset $T\subset G$, it is undecidable whether the normal subgroup generated $\lla T\rra_G$ is amenable or not?
\end{question}
Equivalently, let $R(G)$ be the amenable radical of $G$, i.e., the largest amenable normal subgroup. Does there exists a finitely generated group $G$ with decidable Word Problem such that the quotient $G/R(G)$ has undecidable Word Problem?

\subsection{Continuity of $\rho(G,S)$} We have proven that the restriction of $\rho\colon\mathcal M\to(0,1]$ to the subset of $C'(1/6)$ presentations is continuous. It is natural to wonder if this can be extended to other natural subsets of the space of marked groups $\mathcal M$, such as
\begin{align*}
	\mathcal H_{\mathrm{ne}} & \coloneqq \{(G,S) \mid G\text{ is non-elementary hyperbolic}\}, \\
	\mathcal H_{\mathrm{tf}} & \coloneqq \{(G,S) \mid G\text{ is torsion free hyperbolic}\}.
\end{align*}
We prove that $\rho$ is not continuous on $\mathcal H_{\mathrm{ne}}$. We denote $F(X)$ the free group over $X$, and $N_{c,q}(X)$ the free nilpotent group of class $c$ and exponent $q$ over $X$. Note that $N_{c,q}(X)$ is finite as soon as $X$ is finite. Given a prime $p$, we define
\[ G_n = N_{n,p^n}(N_{n,p^n}(s,t)) \rtimes F(s,t), \]
where $F(s,t)$ acts by left-multiplication, using that $F(s,t)\onto N_{n,p^n}(s,t)$. This is an instance of \say{permutational verbal wreath product}. $G_n$ is generated by $s,t$ and the \say{lamp at $e\in N_{n,p^n}(s,t)$}, which we denote $a$. Free groups being residually $p$, we have
\[ \bigl(G_n,\{a,s,t\}\bigr) \overset{n\to\infty}\longto \bigl(F(F(s,t)) \rtimes F(s,t),\{a,s,t\}\bigr) = \bigl(F(a,s,t),\{a,s,t\}\bigr).\]
Meanwhile, since $N_{n,p^n}(N_{n,p^n}(s,t))$ is finite (hence amenable) and $F(F(s,t))$ is non-amenable, we can apply Kesten's theorem twice to get
$$\forall n\ge 0,\qquad \rho(G_n,S)=\rho(F(s,t),\{e,s,t\})>\rho(F(a,s,t),\{a,s,t\}).$$
This leaves the following question open:
\begin{question}
	Is the restriction of the map $\rho\colon\mathcal M\to(0,1]$ to $\mathcal H_{\mathrm{tf}}$ continuous? How about the restriction to hyperbolic groups with trivial finite radical?
\end{question}

\AtNextBibliography{\small}
\printbibliography

\end{document}